\documentclass[a4paper,10pt,reqno]{amsart}
\usepackage{amsfonts,amsmath,amssymb,amsthm,amsaddr,datetime}
\usepackage{graphicx}
\usepackage{fixmath}
\usepackage[numbers,sort&compress]{natbib}  % ĚíĽÓŐâĐĐ
\usepackage[hidelinks]{hyperref}
\usepackage{tikz}
\usepackage{mathrsfs}
\usepackage{mathtools}
\usepackage{stmaryrd}
\usepackage{psfrag}
\usepackage{epsf}
\usepackage{url,rotating}
\usepackage[cp1250]{inputenc}
\usepackage{latexsym}
\usepackage{setspace}
\usepackage{enumitem}
\usepackage{multirow}
\usepackage[margin=1.2in]{geometry}
\usepackage{makecell}

\usepackage[none]{hyphenat} % ˝űÖąÁ¬×Ö·ű¶Ď´Ę
\usepackage{microtype}      % ¸ÄÉĆÎ˘ĹĹ°ćŁ¬ĽőÉŮÁ¬×Ö·űĘąÓĂ
\usepackage{ragged2e}       % Ěáą©¸üşĂµÄ¶ÔĆëżŘÖĆ

\newtheorem{thm}{Theorem}

\newtheorem{lemma}{Lemma}

\newtheorem{corollary}[thm]{Corollary}

\newtheorem{pb}{Problem}[section]

\newenvironment{kst}
{\setlength{\leftmargini}{2\parindent}
 \begin{itemize}
 \setlength{\itemsep}{-1.1mm}}
{\end{itemize}}

\begin{document}

\pagestyle{plain}
\setlength{\baselineskip}{15pt}
\title{Regular graphs are universally 3-edge-weightable}

\author{Kecai Deng}

\address{Fujian Province University Key Laboratory of Computational Science,\\
School of Mathematical Sciences, Huaqiao University, \\
             Quanzhou 362000, Fujian, P.R. China
}

\email{\qquad  kecaideng@126.com {\bf (K. Deng)}}
\thanks{Kecai Deng is the corresponding author. \\
\hspace*{4.3mm}Kecai Deng is partially supported by NSFC (No. 11701195), and Fundamental Research Funds for the Central Universities (No. ZQN-904).}

\makeatletter
\@namedef{subjclassname@2020}{\textup{2020} Mathematics Subject Classification}
\makeatother
\subjclass[2020]{05C15,05C78}
\keywords{edge weighting,   regular graph, 1-2-3 conjecture, universal 3-edge-weightability}

\begin{abstract}
A graph is universally $k$-edge-weightable if for every $k$-element set $Q\subset\mathbb{R}$, it admits a proper $Q$-edge weighting. The settled 1-2-3 conjecture implies that for any arithmetic progression $\{a,b,c\}$, every nice regular graph has a proper $\{a,b,c\}$-edge weighting. We prove that this remains valid for all 3-element set $\{a,b,c\}$ with $c-b \neq b-a$. Consequently, every nice regular graph is universally $3$-edge-weightable.
\end{abstract}

\maketitle

\section{Introduction}
All graphs considered are simple, finite, and undirected. For notation or terminology not defined here, we refer to \cite{Bondy}. A graph is \emph{nice} if it contains no $K_2$ component. For $G=(V,E)$ and $Q\subset\mathbb{R}$, a \emph{$Q$-edge weighting} is a mapping $w:E\to Q$. When $Q=\{1,2,\ldots,k\}$, it is called a \emph{$k$-edge weighting}. The \emph{weighted degree} of $v$ is $d_w(v)=\sum_{e\ni v}w(e)$. An edge $uv$ is a \emph{conflict} if $d_w(u)=d_w(v)$; $w$ is \emph{proper} if it has no conflicts.

The \emph{1-2-3 conjecture}, posed by Karo\'nski, {\L}uczak, and Thomason~\cite{Thomason}, asserts that every nice graph has a proper $\{1,2,3\}$-edge weighting. This conjecture is a local version of the classical--and still active--irregularity strength problem \cite{Chartrand,Faudree,Przybylo6,Przybylo7,Kalkowski2011,AlonWei,MaXie}. Research on the 1-2-3 conjecture has progressed along several fronts. Upper bounds for general graphs were successively lowered: from 30 \cite{AddarioBerry}, to 16 \cite{AddarioBerry1}, 13 \cite{Wang}, 5 \cite{Kalkowski}, and finally to 3 \cite{Keusch1} after an intermediate improvement to 4 \cite{Keusch}. For specific classes, it was verified for dense graphs \cite{Zhong}, $d$-regular graphs with large $d$ \cite{Przybylo}, graphs with $\delta=\Omega(\log\Delta)$ \cite{Przybylo1}, and shown to hold asymptotically almost surely for random graphs \cite{AddarioBerry1}. Complexity results include NP-completeness for deciding proper $\{1,2\}$-edge weightings \cite{Dudek} and a polynomial-time algorithm for bipartite graphs \cite{Thomassen1}. The conjecture was affirmatively resolved by Keusch \cite{Keusch1}, leading to the 1-2-3 theorem. For more 1-2-3 conjecture related problems, refer to \cite{Bensmail20231,Kalkowski2017,Grytczuk,Baudona,Przybylo2,Wong1,dq1} and the references cited there.

Various strengthenings of the 1-2-3 conjecture have been studied. The list version \cite{Bartnicki09} asks whether every nice graph is \emph{edge-weight $3$-choosable}, i.e. for any assignment of $3$-element lists $L(e)$ to edges, there exists a proper edge weighting $w$ with $w(e)\in L(e)$ for all $e$. Wong and Zhu~\cite{Wong} conjectured a stronger property called \emph{$(1,3)$-choosability}, where each vertex receives a fixed weight (from a $1$-element list) and each edge receives a weight from a $3$-element list; the weighted degree of a vertex is then $d_w(v)=w(v)+\sum_{e\ni v}w(e)$. These properties form a clear hierarchy:
\[
(1,3)\text{-choosable}\Rightarrow\text{edge-weight 3-choosable}\Rightarrow\text{the 1-2-3 theorem}.
\]

Progress on the strongest property, $(1,3)$-choosability, has been steady: Cao~\cite{Cao} proved that every nice graph is $(1,17)$-choosable, improved by Zhu~\cite{Zhu} to $(1,5)$-choosability. Consequently, every nice graph is edge-weight $5$-choosable. However, it remains open whether every nice graph is $(1,3)$-choosable, or even edge-weight $3$-choosable. Partial results show that several graph classes are $(1,3)$-choosable, including complete graphs and complete bipartite graphs~\cite{Bartnicki09}, generalized Halin graphs~\cite{Liang2}, graphs with maximum average degree less than $\frac{11}{4}$~\cite{Liang}, dense Eulerian graphs~\cite{Luzhu}, and Cartesian products of paths and cycles~\cite{Tang2025}.

The 1-2-3 theorem guarantees a proper $\{1,2,3\}$-edge weighting for every nice graph.
A natural weakening of the unsolved  $(1,3)$-choosability conjecture   and edge-weight $3$-choosability conjecture, is to ask whether this extends to  any  three-element set $Q\subset\mathbb{R}$, not just $\{1,2,3\}$.
We call a graph \emph{universally $3$-edge-weightable} if it admits a proper $Q$-edge weighting for every such $Q$.

For regular graphs, affine transformation from the 1-2-3 theorem immediately gives a proper $\{a,b,c\}$-edge weighting for any arithmetic progression $\{a,b,c\}$ (where $b-a=c-b$).
The non-arithmetic case, however,  requires genuinely new constructions.
This paper provides the first such construction, which implies that every regular graph is universally $3$-edge-weightable.

\begin{thm}\label{thm1}
For any positive real numbers $d_1,d_2$ with $0<d_1<d_2$, every $k$-regular graph with $k\ge 3$ admits  a proper $\{-d_1,0,d_2\}$.
\end{thm}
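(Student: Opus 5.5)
By the remarks preceding the statement, it suffices to handle the normalized set $\{-d_1,0,d_2\}$ with $0<d_1<d_2$. Translating every weight by $t$ shifts every weighted degree by $kt$, so it preserves properness. Scaling by a nonzero constant preserves properness as well, and negation exchanges $d_1$ and $d_2$.

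For a $\{-d_1,0,d_2\}$-edge weighting $w$ I will write $p(v)$ and $m(v)$ for the number of edges at $v$ weighted $d_2$ and $-d_1$ respectively. Then $d_w(v)=d_2p(v)-d_1m(v)$, and an edge $uv$ is a conflict exactly when
\[
d_2\bigl(p(u)-p(v)\bigr)=d_1\bigl(m(u)-m(v)\bigr).
\]
The proof therefore splits according to the ratio $d_2/d_1$.

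\emph{Irrational ratio.} If $d_2/d_1\notin\mathbb{Q}$, a conflict forces $p(u)=p(v)$ and $m(u)=m(v)$. I would derive this case from the 1-2-3 theorem. Take a proper $\{1,2,3\}$-weighting, send $1\mapsto -d_1$, $2\mapsto 0$, $3\mapsto d_2$, and let $n_i(v)$ be the number of edges of weight $i$ at $v$. Properness in the $\{1,2,3\}$-weighting says $n_3(u)-n_1(u)\neq n_3(v)-n_1(v)$ for every edge $uv$. Hence the pairs $(p,m)=(n_3,n_1)$ differ at $u$ and $v$, and $w$ has no conflicts.

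\emph{Rational ratio.} Write $d_2/d_1=q/r$ in lowest terms, so $q>r\ge1$. A conflict now means $q(p(u)-p(v))=r(m(u)-m(v))$. Since the counts lie in $[0,k]$, this forces $p(u)-p(v)=rs$ and $m(u)-m(v)=qs$ for some integer $s$ with $|qs|\le k$.

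My plan here is to adapt the Kalkowski--Karo\'nski--Pfender greedy scheme that underlies the bound of $5$ and Keusch's later refinements. Fix an ordering $v_1,\dots,v_n$ of the vertices and process them in turn. When $v_j$ is processed, it may change the weights of its backward edges $v_iv_j$ ($i<j$) within a restricted set. Each earlier vertex $v_i$ keeps a pair of admissible final values of $d_w(v_i)$, and any change to an edge at $v_i$ must keep $d_w(v_i)$ inside that pair.

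The point of using $\{-d_1,0,d_2\}$ rather than an arithmetic progression is the following. If an edge sits at $0$, a later vertex can move it to $d_2$ or to $-d_1$, and these two moves shift the total at the earlier endpoint by different amounts. Regularity keeps all vertices on the same scale, and $k\ge3$ guarantees that each $v_j$ with backward neighbours has at least three incident edges to play with. I would show that $v_j$ can then reach at least $\deg^-(v_j)+1$ distinct candidate totals, while forbidding at most $\deg^-(v_j)$ of them, namely the totals of its already-fixed earlier neighbours. In the conflict equation this amounts to choosing $s$ so that the pair $(p,m)$ at $v_j$ avoids the at most $\deg^-(v_j)$ forbidden lattice lines $q\Delta p=r\Delta m$.

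\emph{Main obstacle.} The hardest step will be this counting when $r$ is small, say $r=1$ and $q=2$, where the ratio is close to the arithmetic progression. In that regime many distinct pairs $(p,m)$ collapse to the same weighted degree, and an exchange at $v_j$ may move an earlier neighbour $v_i$ outside its two admissible values.

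To deal with this, I would first choose the ordering, for instance one coming from a maximum cut or an orientation with in-degree at least $k/2$ where needed. Then I would check directly that exchanging a $0$-edge for a $d_2$-edge together with a $d_2$-edge for a $-d_1$-edge at $v_j$ yields enough distinct values. The non-arithmeticity $d_2\neq d_1$ is exactly what should make these exchanges produce totals distinct from those available in the arithmetic case. I would treat small $k$, especially cubic graphs, as separate base cases if the general count fails.
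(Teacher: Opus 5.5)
Your irrational-ratio case is sound. By regularity, the $\{1,2,3\}$-weighted degree is $2k+n_3(v)-n_1(v)$, so $(n_3,n_1)$ differs across every edge, and for irrational $d_2/d_1$ the map $(p,m)\mapsto d_2p-d_1m$ is injective. The same argument even covers $d_2/d_1=q/r$ with $q\ge k$, since $|m(u)-m(v)|\le k-1$ on an edge.

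But this is the easy part. The hard instances, such as $\{-1,0,2\}$, all lie in your rational case, and there you give a plan, not a proof. Your central claim is that $v_j$ reaches at least $\deg^-(v_j)+1$ distinct totals while each earlier neighbour stays inside its admissible pair $\{a_i,a_i+\delta\}$. With three edge weights this cannot be guaranteed. A backward edge may only change by $\pm\delta$, and whether it can do so depends jointly on its current weight and on where $v_i$ currently sits. For instance, take $\delta=d_1$ and an edge of weight $0$ at a vertex $v_i$ sitting at $a_i$. That edge cannot move at all: going to $-d_1$ leaves the pair, and going to $d_2$ shifts by $d_2\ne d_1$. So backward edges may contribute no freedom.

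In addition, $v_j$ must keep an admissible pair of its own for its later neighbours, which needs the same slack on its forward edges. This is why the greedy scheme is known to give five edge weights, or $\{1,2,3\}$ on edges only when vertex weights $\{1,2\}$ are also allowed. You provide no invariant showing that non-arithmeticity restores the lost slack, and your ``main obstacle'' paragraph effectively concedes this.

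The paper needs no case split on $d_2/d_1$ and no vertex ordering. It partitions $V$ into successive maximum independent sets $I_0,I_1,\dots,I_\ell$. It then uses the Deng--Qiu matching lemma (any independent set of $G-I_0$ can be matched into the maximum independent set $I_0$) together with a degeneracy colouring. This gives every vertex of $I_i$, $i\ge1$, a weighted degree in $\{(i-1)d_2,\,id_2-2d_1,\,id_2-d_1\}$.

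Because $d_1<d_2$, these value sets increase strictly with $i$, so the layers $I_1,\dots,I_\ell$ cannot conflict with each other, while $I_0$ only receives values below $d_2-d_1$. The only remaining conflicts are between $I_0$ and the vertices of $I_1$ with weighted degree $0$. These are removed by local adjustments that use $k\ge3$, a Hall-type matching from such vertices into higher layers, and avoidance of the value $-3d_1$ on $I_0$. Some structural separation of weighted degrees of this kind is the idea missing from your rational case.
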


\begin{corollary}\label{thm2}
Every $k$-regular graph $(k\ge 2)$ is universally $3$-edge-weightable.
\end{corollary}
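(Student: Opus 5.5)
The plan is to derive Corollary~\ref{thm2} from Theorem~\ref{thm1} and the 1-2-3 theorem, using the fact that on a regular graph affine changes of the weight set do not affect properness. Let $G$ be $k$-regular and $Q=\{a,b,c\}$ with $a<b<c$. The first step is the observation that for any $\lambda\neq 0$ and $\mu\in\mathbb{R}$, the map $x\mapsto \lambda x+\mu$ sends a $Q$-edge weighting $w$ to a $(\lambda Q+\mu)$-edge weighting $w'$ with $d_{w'}(v)=\lambda d_w(v)+k\mu$ for every vertex $v$. This works precisely because every vertex has degree $k$. Since $\lambda\neq0$, $d_{w'}(u)=d_{w'}(v)$ holds if and only if $d_w(u)=d_w(v)$, so $w$ is proper exactly when $w'$ is. Hence it suffices to treat one representative set in each affine class.

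Translating by $-b$ reduces $Q$ to $\{-d_1,0,d_2\}$ with $d_1=b-a>0$ and $d_2=c-b>0$. We split into three cases.
\begin{itemize}
\item If $d_1=d_2$, then $Q$ is an arithmetic progression and hence an affine image of $\{1,2,3\}$. Since $G$ is regular with $k\ge 2$, it has no $K_2$ component and is therefore nice. The 1-2-3 theorem \cite{Keusch1} gives a proper $\{1,2,3\}$-edge weighting, which transfers to $Q$ by the observation above.
\item If $d_1<d_2$ and $k\ge3$, Theorem~\ref{thm1} applies directly.
\item If $d_1>d_2$, apply the map $x\mapsto -x$, which sends $\{-d_1,0,d_2\}$ to $\{-d_2,0,d_1\}$ with $d_2<d_1$. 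This reduces to the previous case via Theorem~\ref{thm1}.
\end{itemize}

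It remains to handle $k=2$, where Theorem~\ref{thm1} is not available. Here $G$ is a disjoint union of cycles, and each cycle can be treated separately. On a cycle $C_n$ with edges $e_1,\dots,e_n$ taken cyclically, consider the edge $e_i=v_{i-1}v_i$. Its endpoints have weighted degrees $w(e_{i-1})+w(e_i)$ and $w(e_i)+w(e_{i+1})$. So $e_i$ is a conflict if and only if $w(e_{i-1})=w(e_{i+1})$. Properness therefore amounts to properly colouring, with the three values of $Q$, the auxiliary graph $H$ on $\{e_1,\dots,e_n\}$ in which $e_j$ is adjacent to $e_{j+2}$.

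The graph $H$ is a single cycle of length $n$ when $n$ is odd, and two cycles of length $n/2$ when $n$ is even. For $n=4$ each of these degenerates to a single edge, which is 2-colourable. In every case $H$ has maximum degree at most $2$ and is a union of cycles or edges, so it is 3-colourable. Any bijection between the colours and $Q$ then yields a proper $Q$-edge weighting, regardless of the actual values in $Q$.

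The only real content lies in Theorem~\ref{thm1}, which we assume. The main point to check carefully is the reduction lemma: that regularity makes every affine map, including negative scalings, preserve properness. This is what lets the single case $d_1<d_2$ cover every non-arithmetic set.
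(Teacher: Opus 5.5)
Your proof is correct. For $k\ge 3$ it is the paper's argument. You translate $\{a,b,c\}$ to $\{-(b-a),0,c-b\}$, then apply Keusch's theorem in the arithmetic case, Theorem~\ref{thm1} when $b-a<c-b$, and the negated weighting when $b-a>c-b$. Your single affine lemma (any $\lambda\neq0$, any $\mu$) just packages the paper's separate translation, scaling and negation steps. You also make explicit that $k\ge2$ gives niceness, which Keusch's theorem needs.

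The genuine difference is the case $k=2$. The paper cites the Liang--Wong--Zhu theorem that graphs with maximum average degree below $\frac{11}{4}$ are $(1,3)$-choosable, applied with vertex lists $\{0\}$ and edge lists $\{a,b,c\}$. You instead note that on a cycle the edge $e_i$ is a conflict exactly when $w(e_{i-1})=w(e_{i+1})$. Properness then becomes a proper colouring of the graph on $E(C_n)$ joining $e_j$ to $e_{j+2}$. That graph has maximum degree at most $2$, so it is $3$-colourable, and any injection of the colours into $Q$ works.

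Your route is self-contained and elementary, and it shows why the actual values in $Q$ are irrelevant for cycles. The same greedy argument even gives edge-weight $3$-choosability of cycles, since the constraint is exactly list colouring on a graph of maximum degree $2$. The paper's route is shorter on the page but relies on a much heavier result than this case needs.
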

\begin{proof}
For $k=2$, a $2$-regular graph consists of disjoint cycles.
Graphs with maximum average degree less than $\frac{11}{4}$ are known to be $(1,3)$-choosable~\cite{Liang}, implying the existence of a proper $\{a,b,c\}$-edge weighting.
Thus we may assume $k\ge 3$.

It suffices to show that $G$ admits a proper $\{-(b-a),0,c-b\}$-edge weighting $w'$. Indeed, if we define $w=w'+b$ (that is, we add $b$ to every edge weight), the regularity of $G$ ensures that $w$ is a proper $\{a,b,c\}$-edge weighting.

Consider three cases. If $b-a<c-b$, Theorem~\ref{thm1} directly provides a proper $\{-(b-a),0,c-b\}$-edge weighting. If $b-a=c-b$, Keusch's theorem~\cite{Keusch1} gives a proper $\{1,2,3\}$-edge weighting $w''$ of $G$. Setting $w'=(b-a)(w''-2)$ yields a proper $\{-(b-a),0,c-b\}$-edge weighting. Finally, if $b-a>c-b$, Theorem~\ref{thm1} supplies a proper $\{-(c-b),0,b-a\}$-edge weighting $w'''$; then $w'=-w'''$ is the desired weighting. In all cases the required weighting exists, proving the corollary.
\end{proof}

This settles the regular case completely. Whether universal $3$-edge-weightability holds for all nice graphs remains open:

\begin{pb}[Universal 3-edge-weightability]
Is every nice graph universally $3$-edge-weightable?
\end{pb}

Theorem~\ref{thm1} is proved in Section~2.

\section{Proof of Theorem \ref{thm1}}\label{sec:proof}

We will use the following three lemmas.

\begin{lemma} \label{lemma1} \cite{Bondy} Each $i$-degenerate graph $(i\geq1)$ is $(i+1)$-colorable.\end{lemma}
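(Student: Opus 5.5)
The plan is a greedy colouring along a degeneracy ordering. Recall that $G$ is $i$-degenerate if every subgraph of $G$ (equivalently, every induced subgraph) contains a vertex of degree at most $i$ in that subgraph. Throughout, let $n=|V(G)|$ and use the colour set $\{1,2,\ldots,i+1\}$.

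\emph{Step 1: a degeneracy ordering.} Set $G_n=G$. For $t=n,n-1,\ldots,1$, the graph $G_t$ is a subgraph of $G$. By $i$-degeneracy it contains a vertex $v_t$ with $d_{G_t}(v_t)\le i$. Put $G_{t-1}=G_t-v_t$. This lists all vertices as $v_1,\ldots,v_n$, and for each $t$ the vertex $v_t$ has at most $i$ neighbours in $G_t=G[\{v_1,\ldots,v_t\}]$. In other words, each $v_t$ has at most $i$ neighbours with smaller index.

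\emph{Step 2: greedy colouring.} Colour $v_1,v_2,\ldots,v_n$ in this order. When we reach $v_t$, only its earlier neighbours are already coloured, and there are at most $i$ of them. Hence at most $i$ of the $i+1$ colours are forbidden, so some colour remains for $v_t$; assign it.

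\emph{Step 3: properness.} Every edge $v_sv_t$ with $s<t$ is checked when $v_t$ is coloured, so its endpoints get distinct colours. Thus we obtain a proper $(i+1)$-colouring.

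Equivalently, one can argue by induction on $n$: delete a vertex $v$ of degree at most $i$, colour $G-v$ by induction (it is still $i$-degenerate, being a subgraph of $G$), and extend the colouring to $v$. There is no real obstacle here. The only point needing care is that the degree bound must hold in the \emph{remaining} subgraph at each stage. This is exactly why degeneracy is defined over all subgraphs rather than by a single minimum-degree condition on $G$.
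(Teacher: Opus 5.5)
Your proof is correct and complete. Greedy colouring along a degeneracy ordering is the standard argument, and the paper gives no proof of its own but simply cites Bondy--Murty. One small bonus: your argument works verbatim for $i=0$, which is the case the paper actually needs. In the last round $i=1$ of Step~2 it applies the lemma to the $0$-degenerate graph $G[U_{2}]$ and requires $r\le 1$, whereas the lemma as stated only covers degeneracy parameter at least $1$.
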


\begin{lemma} \label{lemma2} \cite{dq}  Let $G=(V,E)$ be a graph, $I_0$ be a maximum independent set of $G$, and $I_1$ be an independent set of $G[V\setminus I_0]$. Then the bipartite graph $G[I_0,I_1]$ admits a matching   saturating $I_1$.\end{lemma}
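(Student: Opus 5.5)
The plan is to verify Hall's condition for the bipartite graph $G[I_0,I_1]$ on the side $I_1$, using the maximality of $I_0$ through a standard exchange argument. By Hall's theorem, a matching saturating $I_1$ exists if and only if every subset $S\subseteq I_1$ satisfies $|N(S)\cap I_0|\ge |S|$. Here $N(S)$ is the neighbourhood of $S$ in $G$, and only its intersection with $I_0$ matters in $G[I_0,I_1]$. So I would argue by contradiction and suppose some $S\subseteq I_1$ has $|N(S)\cap I_0|<|S|$.

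The key step is to swap vertices into a larger independent set. Define
\[
I' = \bigl(I_0\setminus N(S)\bigr)\cup S .
\]
This set is independent, which I would check in three parts:
\begin{itemize}
\item $I_0\setminus N(S)$ is independent because it is a subset of $I_0$.
\item $S$ is independent because it is a subset of $I_1$.
\item No edge joins $S$ to $I_0\setminus N(S)$, by the definition of $N(S)$.
\end{itemize}
Moreover $S\cap I_0=\emptyset$, since $I_1\subseteq V\setminus I_0$. The union is therefore disjoint, and
\[
|I'| = |I_0|-|N(S)\cap I_0|+|S| > |I_0| .
\]
This contradicts the choice of $I_0$ as a maximum independent set. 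Hence Hall's condition holds and the required matching exists.

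There is no substantive obstacle here. The only points needing care are that $S$ is disjoint from $I_0$, so the cardinality count is exact, and that the independence of $I_1$ is actually used, which it is in showing that $S$ itself is independent. The hypothesis that $I_1$ is independent in $G[V\setminus I_0]$ is exactly what makes the swap valid.
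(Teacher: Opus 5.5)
Your argument is correct and complete: if some $S\subseteq I_1$ violated Hall's condition, then $(I_0\setminus N(S))\cup S$ would be an independent set larger than $I_0$, which contradicts the maximality of $I_0$. The paper does not prove this lemma itself but quotes it from \cite{dq}, and your Hall-plus-exchange proof is the standard argument for that cited fact, so nothing needs to be added.
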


\begin{lemma} \label{lemma3}   Let $G=(I_0\cup I_1,E)$ be a  bipartite graph with $d(v)\geq3$ for each $v\in I_1$; let   $d_1$ and $d_2$ be two real numbers with $0<d_1<d_2$.  Then $G$ admits a proper $\{-d_1,0,d_2\}$-edge weighting.\end{lemma}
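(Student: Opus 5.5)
The plan is a Kalkowski-type sequential algorithm on the side $I_1$, using the degree condition $d(v)\ge 3$ to give each vertex of $I_1$ enough freedom. Since $G$ is bipartite with parts $I_0,I_1$, every conflict is an edge $uv$ with $u\in I_0$ and $v\in I_1$. So it suffices to make each $v\in I_1$ have a final weighted degree different from the final weighted degrees of all its neighbours in $I_0$. Isolated vertices and components with no edges can be ignored.

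The order of the construction is as follows. Fix an ordering $v_1,\dots,v_n$ of $I_1$. Start with $w\equiv 0$. Call $u\in I_0$ \emph{finished} once all its neighbours in $I_1$ have been processed. The invariant is that each unfinished $u\in I_0$ is associated with a small set $A(u)$ of at most two candidate values, and that its final weighted degree will land in $A(u)$. We arrange this by reserving, for each $u$, its edge to its last neighbour in the ordering, so that this edge can still be toggled between two of the values $-d_1,0,d_2$.

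When $v_i$ is processed, we choose the weights on all its edges not yet fixed. We also choose the toggles on reserved edges whose owner $u$ becomes finished at $v_i$. The aim is that
\begin{itemize}
\item each newly finished $u$ gets a final value in $A(u)$ different from the weighted degree of each processed neighbour, and
\item $d_w(v_i)$ avoids the (at most two) candidate values of each neighbour $u$.
\end{itemize}

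The counting step is where $d(v)\ge3$ and $0<d_1<d_2$ enter. I would show that three edges with weights in $\{-d_1,0,d_2\}$ realize at least $10$ distinct sums. These are the sums $\alpha d_2-\beta d_1$ with $\alpha+\beta\le 3$, and they are distinct unless $d_2/d_1$ is a small rational such as $2$ or $3/2$, in which case a separate count is still enough. The freedom at $v$ is then compared with the number of forbidden values. The problem is that $v$ may have many neighbours in $I_0$, each forbidding up to two values. To handle this, I would restrict $v$'s changes to a modification pattern on just three of its edges, leaving the others fixed. For each neighbour, I would use the candidate pair $A(u)$ together with the sign-asymmetry: changing the weight of $uv$ shifts both $d_w(u)$ and $d_w(v)$ by the same amount. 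Hence, for a neighbour $u$ whose final value is still adjustable on the edge $uv$, the conflict condition $d_w(u)\neq d_w(v)$ does not depend on the choice made on $uv$. Only the choices on $v$'s other edges matter, which reduces the number of forbidden choices per neighbour.

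I expect the main obstacle to be the neighbours $u$ that are already finished when $v$ is processed: their values are fixed and each kills options at $v$. The fix I would try first is to order $I_1$ so that processing proceeds component by component along a BFS of the bipartite graph, choosing the reserved edge of each $u$ to be the one to its BFS-last neighbour. Combined with the shift-invariance just noted, this should leave each $v$ at most two "hard" constraints against at least three genuinely free edges. If this counting fails, the fallback is a parity/sign argument: force $d_w(v)>0$ for $v\in I_1$, using an edge of weight $d_2$, and $d_w(u)\le 0$ for $u\in I_0$ wherever possible, using $d_1<d_2$ to make the exceptional vertices repairable locally.
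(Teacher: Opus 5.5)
\textbf{There is a genuine gap: the main plan cannot be carried out as described, and its key counting claim has no support.}

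\textbf{The reserved-edge scheme.} In your scheme, once $v_i$ is processed every edge at $v_i$ is fixed. Its edges to $u$ with $\ell(u)>i$ are non-reserved, and the rest are toggles chosen at step $i$. So $d_w(v_i)$ is final at that moment. Also, by your own definition, no neighbour of $v_i$ is finished before $v_i$ is processed, so your stated obstacle does not arise.

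The real difficulty is the reverse. A vertex $u\in I_0$ of large degree is finished only at $v_{\ell(u)}$. Its final value must then differ from the already-fixed values $d_w(v_k)$ of all its earlier neighbours. One reserved edge gives at most three options, so this works only if each $v_k$ avoided $A(u)$ when it was processed. But $A(u)$ is not yet determined at that time: it depends on the non-reserved edges $uv_{k'}$ with $k<k'<\ell(u)$. Those are exactly the edges you want to use as free edges of $v_{k'}$. If instead you freeze non-reserved edges to pin $A(u)$ down early, then a vertex of $I_1$ that is nobody's last neighbour has no free edge at all. Either way, the claim of ``at most two hard constraints against at least three genuinely free edges'' does not follow from your invariant. (Minor point: among the sums $\alpha d_2-\beta d_1$ with $\alpha+\beta\le 3$, the only coincidence is $d_2-2d_1=0$, when $d_2=2d_1$; the ratio $3/2$ causes none.)

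\textbf{The fallback.} It also fails as stated. In a bipartite graph, $\sum_{u\in I_0}d_w(u)=\sum_{e\in E}w(e)=\sum_{v\in I_1}d_w(v)$. So if $I_1\neq\emptyset$ and $d_w(v)>0$ for all $v\in I_1$, some $u\in I_0$ has $d_w(u)>0$. Exceptions are therefore forced, and ``repairable locally'' is the whole problem.

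\textbf{The missing idea.} The paper separates the two sides in a way that does not depend on neighbourhoods, and it lets $I_1$ take a negative value. Every $v\in I_1$ ends at a value in $\{-3d_1,d_2\}$. Every $u\in I_0$ is kept throughout in $(-\infty,d_2-d_1)\setminus\{-3d_1\}$. This is maintained by one greedy pass over $I_1$, with all edges at unprocessed vertices still $0$:
\begin{itemize}
\item If $v$ has a neighbour $y$ with $d_w(y)\in(-\infty,-d_1)\setminus\{-d_2-3d_1\}$, set $w(vy)=d_2$. Then $d_w(v)=d_2$, and $y$ stays in the allowed set.
\item Otherwise every neighbour lies in $[-d_1,d_2-d_1)\cup\{-d_2-3d_1\}$. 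Put $-d_1$ on three edges at $v$; this is where $d(v)\ge 3$ is used. This gives $d_w(v)=-3d_1$ and moves those three neighbours into $[-2d_1,d_2-2d_1)\cup\{-d_2-4d_1\}$, which avoids $-3d_1$ and stays below $d_2-d_1$.
\end{itemize}
Each vertex of $I_0$ only has to avoid the two fixed numbers $-3d_1$ and $d_2$, so its degree is irrelevant. No reserved edges, candidate sets or counting are needed.
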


\begin{proof} We construct an edge weighting $w$
 such that every vertex in   $I_1$ attains a weighted degree in $\{-3d_1,d_2\}$ and each one in $I_0$ has weighted degree in $(-\infty,d_2-d_1)\setminus \{-3d_1\}$.
	
	Initially, we assign weight 0 to every edge. 	Let $I_1=\{x_i:i=1,2,\ldots,p\}$.

	First, label three of the edges incident to $x_1$ with $-d_1$. Then $d_w(x_1)=-3d_1$, and for each $y\in I_0$, $d_w(y)\in\{0,-d_1\}\subseteq (-\infty,d_2-d_1)\setminus \{-3d_1\}$.
	
	Second, for $i=2,3,\ldots,p$, suppose:	
	
	(i) for each  $j\geq i$, no edge incident to $x_j$ is weighted;
	
	(ii) for each vertex $y$ in $I_0$, $d_{w}(y)\in (-\infty,d_2-d_1)\setminus \{-3d_1\}$;
	
	(iii) for each $j<i$, $d_w(x_j)\in\{-3d_1,d_2\}$.

 	\emph{Case 1:} If $x_i$ has a neighbor $y^\ast$ with $d_w(y^\ast)\in(-\infty,-d_1)\setminus\{-d_2-3d_1\}$,
let $w(x_iy^\ast)=d_2$.
After this assignment, $d_w(x_i)=d_2$,   $d_w(y^\ast) \in (-\infty,d_2-d_1)\setminus\{-3d_1\}$, and each $y\in I_0\setminus\{y^\ast\}$ still satisfies $d_w(y) \in (-\infty,d_2-d_1)\setminus\{-3d_1\}$.
Thus, conditions (i)-(iii) are maintained for $i+1$.
	
	\emph{Case 2:} If each neighbor of $x_i$    has a   weighted degree in
	$[-d_1,d_2-d_1)\cup\{-d_2-3d_1\}$, let $y_1,y_2$ and $y_3$ be three of them. Set $w(x_iy_1)=w(x_iy_2)=w(x_iy_3)=-d_1$. Then $d_w(x_i)=-3d_1$.  For each $j=1,2,3$,  $d_w(y_j)\in [-2d_1,d_2-2d_1)\cup\{-d_2-4d_1\}$, which is a subset of    $(-\infty,d_2-d_1)\setminus \{-3d_1\}$. What is more, each $y\in I_0\setminus\{y_1,y_2,y_3\}$ still satisfies $d_w(y) \in (-\infty,d_2-d_1)\setminus\{-3d_1\}$.  Thus, conditions (i)-(iii) hold  for $i+1$.

Consequently, each vertex in  $I_0$ attains a weighted degree in $(-\infty,d_2-d_1)\setminus \{-3d_1\}$, while each one in  $I_1$ has weighted degree in $\{-3d_1,d_2\}$, and  we obtain a proper $\{-d_1,0,d_2\}$-edge weighting.	
	\end{proof}

\begin{proof}[Proof of Theorem \ref{thm1}]
Let $G=(V,E)$ be a $k$-regular graph with $k\geq3$.

\noindent\textbf{Partition.}
We partition $V$ into independent sets as follows. Choose a maximum independent set $I_0$ in $G$. For $i\geq 1$, let $I_i$ be a maximum independent set in $G_i$, where $G_{i}\triangleq G[V\setminus (\cup_{j< i}I_j)]$. Let $\ell$ be the maximum integer such that $I_{\ell}\neq \emptyset$. Then $I_{\ell}$ is an independent set, and $1\leq \ell\leq k+1$ since $G$ is nonempty and each vertex in $I_\ell$ has a neighbor in $I_i$ for each $i<\ell$. If $\ell=1$, the conclusion holds from Lemma \ref{lemma3}. So suppose $2\leq \ell\leq k+1$.

We will construct a proper $\{-d_1,0,d_2\}$-edge weighting $w$   in two phases. In the weighting process, for $i=1,2,\ldots,\ell,$ a vertex $v$ in $I_i$ is called:
 \begin{kst} \item \vspace{1mm}  \emph{Type I} if $d_w(v)=(i-1)d_2$, and $v$ is incident to exactly $i-1$ edges of weight $d_2$  connecting $v$ to $\cup_{j\geq 1}I_j$;
 	\item\vspace{1mm}	   \emph{Type II} if $d_w(v)=id_2-d_1$,  and $v$ is incident to exactly $i$ edges of weight $d_2$ connecting $v$ to $\cup_{j\geq 1}I_j$ and one  edge of weight $-d_1$ connecting $v$ to $I_0$;
 	\item \vspace{1mm}  \emph{Type III} if $v\in I_i$ for some $i\geq2$, $d_w(v)=id_2-2d_1$, and $v$ is incident to exactly $i$ edges of weight $d_2$ connecting $v$ to $\cup_{j\geq 1}I_j$ and two edges of weight $-d_1$: one connecting $v$ to $I_0$ and the other connecting $v$ to a vertex in $I_1$;
 	\item \vspace{1mm}  \emph{hungry} if $v\in I_i$ for some $i\geq2$, $d_w(v)=pd_2$ for some $p\leq i-2$  and $v$ is incident to exactly $p$ edges of weight $d_2$  connecting $v$ to $\cup_{j\geq 1}I_j$.
 \end{kst}

\noindent\textbf{Goal.}
We will make sure $w$ finally satisfies the following conditions:
\begin{kst}
\item[(1)]\vspace{1mm}  for  each vertex $v$  in $I_i$ with $i\geq2$, $d_w(v)\in \{(i-1)d_2,id_2-2d_1,id_2-d_1\}$ (that is, $v$ is Type I, II or III);
\item[(2)]\vspace{1mm} for each $x$ in $I_1$, $d_w(x)\in\{-3d_1,-d_1,0,d_2-d_1\}$;
\item[(3)]\vspace{1mm} for each $y$ in $I_0$, $d_w(y)\in(-\infty,d_2-d_1)$;
\item[(4)]\vspace{1mm} for each vertex $x$ in $I_1$ with   $d_w(x)=-3d_1$, $x$ has at least two neighbors in $I_0$, and  every neighbor $y$ of  $x$ in $I_0$ satisfies  $d_w(y)\in(-\infty,d_2-d_1)\setminus\{-3d_1\}$;
\item[(5)]\vspace{1mm} for each vertex $x$ in $I_1$ with   $d_w(x)=-d_1$, $x$ has a unique   neighbor $y_x$ in $I_0$, and $d_w(y_x)=0$;
\item[(6)]\vspace{1mm} for each vertex $x$ in $I_1$ with   $d_w(x)=0$, every neighbor $y$ of  $x$ in $I_0$ satisfies  $d_w(y)\in(-\infty,d_2-d_1)\setminus\{0\}$.
\end{kst}
Note that in (1),  $\max\{(i-1)d_2,id_2-2d_1\}<id_2-d_1$ because $d_1<d_2$. Moreover, for $1\leq i<j$, one has $id_2-d_1<\min\{(j-1)d_2,jd_2-2d_1\}$.  So (1)-(3) ensure  there is no conflict in $G[\cup_{i\geq1}I_i]$, and there is  no   conflict between $I_0$ and $\cup_{i\geq2}I_i$.   Clearly,   (3)-(6) ensure  there is no conflict between $I_0$ and $I_1$.

The construction proceeds in two phases. Phase~1 ensures condition (1)-(3) hold; Phase~2 ensures conditions (4)-(6) hold, with (1)-(3) remaining valid.

In the beginning, every edge is weighted with 0.

\subsection*{Phase 1: Initial weighting}

 In this phase, we will make sure:
 \begin{kst} \item[(1')]\vspace{1mm}
 	 every vertex in $\cup_{i\geq1}I_i$  is Type I or II;
 \item[(2')]\vspace{1mm} for each $y$ in $I_0$, $d_w(y)\in(-\infty,0]$;
  \item[(3')]\vspace{1mm} no Type I vertex in $I_1$   has a      Type I neighbor in  $\cup_{i\geq 2}I_i$.
 \end{kst}

Clearly, the validity of conditions (1')-(3') guarantees that (1)-(3) hold.

\emph{Step 1}  By Lemma~\ref{lemma2},   there exists a matching $M_{\ell-1,\ell}$ in the bipartite graph $G[I_{\ell-1},I_\ell]$ that saturates $I_l$. Weight each edge in $M_{\ell-1,\ell}$ with $d_2$.  Then when $\ell=2$, each vertex in $I_2$ is Type I (has weighted degree $d_2=(2-1)d_2$); when $\ell\geq3$, each vertex in $I_\ell$ is hungry (has weighted degree $d_2\leq (\ell-2)d_2$).  Let $I^\ast_{\ell-1}$ be the set of vertices in $I_{\ell-1}$ that are saturated by $M_{\ell-1,\ell}$.
  \begin{kst}
  \item \vspace{1mm} When $\ell=2$, by Lemma~\ref{lemma2},   there exists a matching $M^\ast_{0,1}$ in the bipartite graph $G[I_{0},I^\ast_{1}]$ that saturates $I^\ast_{1}$; weight each  edge in $M^\ast_{0,1}$ with $-d_1$, which makes each vertex in $I^\ast_{1}$ Type II (has weighted degree $d_2-d_1$), and each one in $I_1\setminus I^\ast_{1}$ Type I (has weighted degree $0=(1-1)d_2$). Then (1') and (2')   already hold in this case.

  \item\vspace{1mm}    When   $\ell=3$, each vertex in $I^\ast_{2}$ is Type I (has weighted degree $d_2=(2-1)d_2$),    each vertex in $I_2\setminus I^\ast_{2}$ is hungry (has weighted degree $0\leq(2-2)d_2$), and each  (hungry)  vertex $u$ in $I_3$ is incident to an edge  of weight $d_2$ whose  other endpoint lies in   $I_{2}$.

  \item \vspace{1mm}  When   $\ell\geq4$, each vertex in $I_{\ell-1}$ is  hungry (has weighted degree at most $d_2\leq [(\ell-1)-2]d_2$), and each (hungry) vertex $u$ in $I_\ell$ is incident to an edge  of weight $d_2$ whose  other endpoint lies in    $I_{\ell-1}$.

      \end{kst}

\emph{Step 2}  When $\ell\geq3$,  we process the indices $i=\ell-2,\ell-3,\ldots,1$ in decreasing order. Right before proceeding  $i$, we assume:
\begin{kst}
\item[(A)]\vspace{1mm}   no edge incident to $\cup_{1\leq j\leq i}I_j$ has been weighted;
\item[(B)]\vspace{1mm}  for each $\alpha\geq i+1$, every vertex in $I_\alpha$  is Type I, Type II or hungry;

\item[(C)]\vspace{1mm}  for $\alpha\geq i+2$, if  $I_\alpha$ has hungry vertices, then: for  every hungry vertex   $u$ in $I_\alpha$ and every $\beta=i+1,\ldots,\alpha-1$, $u$ is incident to at least one edge  of weight $d_2$ whose  other endpoint lies in $I_{\beta}$.
\end{kst}
(The starting point of this inductive hypothesis holds true after Step 1.)

Let $E_{i+1}$ be the set of edges with weight $0$ in the subgraph  $G[\cup_{j\geq i+1} I_{j}]$. If $E_{i+1}\neq \emptyset$, we process the edges in $E_{i+1}$ one by one. When considering an edge $e$, if both endpoints are hungry, we set $w(e)=d_2$; otherwise we leave $e$ unweighted. After processing all edges in $E_{i+1}$, let $U_{i+1}$ be the set of hungry vertices in $\cup_{j\geq i+1} I_{j}$. By construction,  if an edge has both endpoints in $U_{i+1}$, then it has already been weighted with $d_2$; hence the induced subgraph  $G[U_{i+1}]$ (if nonempty) has all edges weighted with $d_2$.

Take an arbitrary vertex $u\in U_{i+1}$, and suppose $u\in I_\alpha$ for some $\alpha=i+1,\ldots,\ell$. Since $u$ is hungry, $d_w(u)=jd_2$ for some $j\leq \alpha-2$.
By assumption (C), for each $\beta=i+1,\ldots,\alpha-1$, $u$ is incident to at least one edge of weight $d_2$ with other endpoint in $I_\beta$.
There are $(\alpha-1)-(i+1)+1 = \alpha-i-1$ such indices $\beta$.
Thus at most $(\alpha-2) - (\alpha-i-1) = i-1$ edges of weight $d_2$ incident to $u$ can have their other endpoints in $\cup_{j\geq \alpha+1} I_j$.  Recall that $G[U_{i+1}]$   has all edges weighted with $d_2$.   So within the subgraph $G[U_{i+1}]$, the vertex $u$ has at most $i-1$ neighbors in $U_{i+1}\cap(\cup_{j\geq \alpha+1} I_j)$. Since $I_\alpha$ is an independent set  and  $u$ is arbitrary, it follows that $G[U_{i+1}]$ is $(i-1)$-degenerate. By Lemma~\ref{lemma1}, $U_{i+1}$ can be partitioned into at most $i$ independent sets in $G[U_{i+1}]$. Each such independent set is also independent in $G$. Hence we can write $U_{i+1}=J_1\oplus\cdots\oplus J_r$ with $r\leq i$, where each $J_t$ is independent in $G$.

By Lemma~\ref{lemma2}, for each $t=1,2,\ldots,r$, there exists a matching $M'_{i,t}$ in the bipartite graph $G[I_i,J_t]$ that saturates $J_t$. For every edge $e$ in $\bigcup_{t=1}^{r}M'_{i,t}$, set $w(e)=d_2$. At this point, each vertex $v\in I_i$ is incident to at most $r$ edges of weight $d_2$ from these matchings, so $d_w(v)=jd_2$ for some $j\leq r\leq i$.

Let $\overline{I}_{i}\subseteq I_i$ be the set of vertices with weighted degree exactly $id_2$. Applying Lemma~\ref{lemma2} again, there exists a matching $\overline{M}_{0,i}$ in $G[I_0,\overline{I}_i]$ that saturates $\overline{I}_i$. For each edge $e\in\overline{M}_{0,i}$, set $w(e)=-d_1$.
Then each vertex in $\overline{I}_i$ becomes Type II, and each one in $I_i\setminus\overline{I}_i$ is either Type I or hungry.

After these assignments, the following hold:
\begin{kst}
	\item[(A')]\vspace{1mm} no edge incident to $\cup_{1\leq j\leq i-1}I_j$ has been weighted;
	\item[(B')] \vspace{1mm} for each $\alpha\geq i$, every vertex in $I_\alpha$  is Type I, Type II or hungry;

	\item[(C')]\vspace{1mm} for $\alpha\geq i+1$, if  $I_\alpha$ has hungry vertices, then: for  every hungry vertex   $u$ in $I_\alpha$ and every $\beta=i,\ldots,\alpha-1$, $u$ is incident to at least one edge  of weight $d_2$ whose  other endpoint lies in $I_\beta$.
\end{kst}
Indeed, (B') holds because: if $v\in \cup_{j\geq i+1}I_{j}$ was already Type I or II after step $i+1$, it remains Type I or II; if $u\in U_{i+1}$   received an edge of weight $d_2$ from some $M'_{i,t}$, it now  becomes Type I or remains hungry (the extra $-d_1$ edges to $I_0$ do not affect the condition for vertices in $U_{i+1}$); if $v\in I_{i}$, then either    it  received at most  $i-1$ edges of weight $d_2$ from   matchings $M'_{i,t}$'s (in which case it becomes
Type I or remains hungry),  or received exactly  $i$ edges of weight $d_2$ and exactly one edge of weight $-d_1$ from $\overline{M}_{0,i}$ (in which case it becomes
Type II).

Proceeding in this way for $i=\ell-2,\ell-3,\ldots,1$, we obtain at the end   that   every vertex in  $\cup_{\alpha\geq 1}I_\alpha$ is Type I or II. Since at the end, on  one hand, each $v$ in $I_1$ has weighted degree $d_2-d_1$ (Type II) or 0 (Type I). On the other hand, if some $u$ in $I_\alpha$ ($\alpha\geq2$) is hungry, it should  satisfies (C'):    $u$ is incident to  at least one edge of   weight $d_2$ with other endpoint in    $I_\beta$ for  each $\beta=1,\ldots,\alpha-1$ (at the end $i=1$). That is,     $d_w(u)\geq(\alpha-1)d_2$ which implies that $u$ is not hungry, a contradiction. Consequently, condition (1')  holds. Moreover, every $y$ in $I_0$  satisfies $d_w(y)\in(-\infty,0]$  at this stage and (2')  holds.

For (3'), if   there exists a Type I vertex $v$ in $I_{1}$ that  has a Type~I neighbor $u$ in some $I_i$ ($i\geq 2$), let $e_v$ and $e_u$ be the edges joining $v$ and $u$ to $I_0$, respectively  (recall that each vertex in $\cup_{j\geq1}I_j$ has a neighbor in $I_0$ since $I_0$ is a maximum independent set). Clearly, $e_v, e_u$ and $uv$ have not been weighted since $v$ and $u$ are both Type I (moreover, $v$ is   in $I_1$).   Set $w(e_v)=w(e_u)=-d_1$ and set $w(uv)=d_2$. Then both $u$ and $v$ become  Type~II  (so condition (1') remains valid),  and every vertex in $I_0$ still has weighted degree at most $0$ (so condition (2') remains valid).  Repeat this step until  no Type I vertex   in $I_{1}$  has a Type~I neighbor in $\cup_{i\geq2}I_i$, and  (3') holds finally.

\subsection*{Phase 2: Conflict resolution}

For $i=0,1$, let $I_{i,0}=\{v\in I_i:d_w(v)=0\}.$ By our construction, all edges incident to vertices in $I_{0,0}\cup I_{1,0}$ currently have weight $0$.
   After Phase~1, by (1')  and (2'), the only possible conflicts in $G$ are those belonging to the bipartite subgraph $G[I_{0,0},I_{1,0}]$. If $G[I_{0,0},I_{1,0}]$ is empty, we are done. Assume therefore that it is nonempty.

    For each $x\in I_{1,0}$ with $N(x)\cap I_{0,0}\neq\emptyset$, choose an arbitrary vertex from $N(x)\cap I_0$ and denote it by $y_x$. For $i\geq 1$, let $X_{i}=\{x\in I_{1,0}: |N(x)\cap I_0|= i\}$ and $X_{i^+}=\cup_{j\geq i}X_j$.

Now consider the bipartite subgraph $H' := G[X_1\cup X_2, \cup_{i\geq2}I_i]$. On one hand, for any $x\in X_1\cup X_2$, the vertex $x$ has at least $k-2$ Type~II neighbors in $\bigcup_{i\ge 2}I_i$. This holds because, by conditions (1') and (3'), every neighbor of $x$ either lies in $I_0$ or is a Type~II vertex in $\bigcup_{i\ge 2}I_i$.
On the other hand, consider a Type~II vertex $z\in I_i$ with $i\ge 2$. By definition, $z$ is incident to exactly $i$ edges of weight $d_2$ and one edge of weight $-d_1$. Since all edges incident to vertices in $X_1\cup X_2$ currently carry weight $0$, any edge of weight $d_2$ or $-d_1$ incident to $z$ must have its other endpoint outside $X_1\cup X_2$.
Thus $z$    can be adjacent to at most $k-2$ vertices in $X_1\cup X_2$.
Thus, in $H'$, every vertex in $X_1\cup X_2$ has degree at least $k-2$, and every vertex in $\cup_{i\geq2}I_i$ has degree at most $k-3$.   By Hall's   theorem,  there exists a matching $M$ in $H'$ that saturates $X_1\cup X_2$.    Write $M=M_1\oplus M_2$, where $M_i$ saturates $X_i$ for $i=1,2$. For each
$x\in X_1\cup X_2$, let $z_x$ be the (Type II) vertex in $\cup_{i\geq2}I_i$ such that
$xz_x\in M$.

We will resolve the conflicts involving $X_{2^+}$ and $X_1$ separately.

\noindent\textbf{1. Resolving conflicts incident to $X_{2^+}$.}

Let $X^\ast_{2^+}\subseteq X_{2^+}$ be the set of vertices that have a neighbor  in $I_{0,0}$.
If $X^\ast_{2^+}=\emptyset$, then we are done. Suppose $X^\ast_{2^+}\neq\emptyset$.

 We select a vertex $x_i$ from $X^\ast_{2^+}$ (arbitrarily for $i=1$, and from the updated $X^\ast_{2^+}$ for $i>1$) and handle it according to the cases below.
After processing $x_i$, it will be removed from $I_{1,0}$ (since its weighted degree will become  non-zero), and consequently from $X_{2^+}$ and $X^\ast_{2^+}$ (this process will gradually eliminates vertices from $X_{2^+}$ until $X^\ast_{2^+}$ becomes   empty).
We then update $I_{1,0}$, $X_{2^+}$, $X^\ast_{2^+}$  accordingly.

Before processing $x_i$, we assume:
\begin{kst}
\item[(i)]\vspace{1mm} no edge incident to unprocessed vertices in $X_{2^+}$ has been   weighted;
\item[(ii)]\vspace{1mm} for every $y\in I_0$, $d_w(y)\in (-\infty, d_2-d_1)$;
\item[(iii)]\vspace{1mm} each previously processed vertex $x_j$ ($j < i$) satisfies one of the following conditions:
 \begin{kst}
 \item \vspace{1mm} $d_w(x_j)=d_2-d_1$;
  \item \vspace{1mm}   $d_w(x_j)=-3d_1$ and every neighbor $y$ of $x_j$ in $I_0$ satisfies $d_w(y)\in(-\infty, d_2-d_1)\setminus\{-3d_1\}$.
      \end{kst}
\end{kst}
(For $i=1$, conditions (i) and (ii) hold; no vertex has been processed and so condition (iii) holds vacuously.)

We distinguish two cases.

\emph{Case 1:} If there exists  $y^\ast\in N(x_i)\cap I_0$ such that $d_w(y^\ast)\in(-\infty,-d_1)\setminus\{-d_2-3d_1\}$. Clearly, $y^\ast\neq y_{x_i}$, since by the definition of $y_{x_i}$, we have $d_w(y_{x_i})=0$ at this stage.
Set $w(x_iy^\ast)=d_2$ and $w(x_iy_{x_i})=-d_1$.
After these assignments, $d_w(x_i)=d_2-d_1$,
while $d_w(y_{x_i})=-d_1$ and   $d_w(y^\ast)\in(-\infty,d_2-d_1)\setminus\{-3d_1\}$. Moreover,  the weighted degree of each vertex in  $(I_0\setminus\{y_{x_i},y^\ast\})\cup (X_{2^+}\setminus \{x_i\})$  remains unchanged.
Thus,  conditions (i)-(iii) continue to hold for $i+1$.

\emph{Case 2:} If every vertex in $N(x_i)\cap I_0$  has weighted degree in $[-d_1,d_2-d_1)\cup \{-d_2-3d_1\}$.

\emph{Subcase 2.1:} If $x_i\in X_{3^+}$.
Let $y_1,y_2,y_3\in N(x_i)\cap I_0$ and set $w(x_iy_1)=w(x_iy_2)=w(x_iy_3)=-d_1$.
This gives $d_w(x_i)=-3d_1$, while $d_w(y_1),d_w(y_2),d_w(y_3)\in [-2d_1,d_2-2d_1)\cup \{-d_2-4d_1\}$.  Then every vertex in $N(x_i)\cap I_0$   has weighted degree in $[-2d_1,d_2-d_1)\cup \{-d_2-4d_1,-d_2-3d_1\}$, which  is a subset of   $(-\infty, d_2-d_1)\setminus\{-3d_1\}$. Moreover, the weighted degree of each vertex in  $(I_0\setminus\{y_1,y_2,y_3\})\cup (X_{2^+}\setminus \{x_i\})$  remains unchanged.     Thus, conditions (i)-(iii)   continue  to hold  for $i+1$.

\emph{Subcase 2.2:} If $x_i\in X_{2}$. Assume $N(x_i)\cap I_0=\{y_{x_i},y'\}$ and  $z_{x_i}\in I_q$ for some $q\geq 2$.  Recall that  $z_{x_i}$ is Type II and so   $d_w(z_{x_i})=qd_2-d_1$. Now set $w(x_iy_{x_i})=w(x_iy')=w(x_iz_{x_i})=-d_1$. This gives $d_w(x_i)=-3d_1$, while $d_w(y_{x_i})=-d_1$, $d_w(y')\in [-2d_1,d_2-2d_1)\cup \{-d_2-4d_1\}$ and $d_w(z_{x_i})=qd_2-2d_1$ (that is, $z_{x_i}$ becomes Type III and  so condition (1) remains valid). Then  $\{d_w(y_{x_i}),d_w(y')\}\subseteq [-2d_1,d_2-2d_1)\cup \{-d_2-4d_1\}$, which   is   a subset of   $(-\infty, d_2-d_1)\setminus\{-3d_1\}$.  Moreover, the weighted degree of each vertex in   $(I_0\setminus\{y_{x_i},y'\})\cup (X_{2^+}\setminus \{x_i\})$  remains unchanged. Thus, conditions  (i)-(iii)  continue  to hold  for $i+1$.

\vspace{1mm}\textbf{Remark on the dynamics of $I_{0,0}$:} In Case~1, it is possible that $d_w(y^\ast)$ becomes $0$ after adding $d_2$ (if originally $d_w(y^\ast) = -d_2$), placing $y^\ast$ back into $I_{0,0}$. However, by assumption (i), all edges incident to $\{x_j:j>i\}$ are still unweighted, so any new conflict involving $y^\ast$ and $x_j$ ($j>i$) will be resolved when $x_j$ is processed. Moreover, $y^\ast$ cannot conflict with already processed vertices $x_s$ ($s<i$) because those have weighted degrees either $d_2-d_1$ or $-3d_1$, both different from $0$.  For Case  2, all modified weighted degrees in $I_0$ become negative;   so  $I_0$-vertices  cannot re-enter $I_{0,0}$.
\vspace{1mm}

After handling $x_i$, update $I_{0,0}, X_{2^+}$ and $X^\ast_{2^+}$, respectively.
Note that $x_i$ and all previously processed vertices $x_j$ ($j < i$) are no longer in $X^\ast_{2^+}$ (no longer in $X_{2^+}$ either),
since they either have weighted degree $d_2-d_1$ (which differs from $0$),
or have weighted degree $-3d_1$ while all their neighbors in $I_0$ have weighted degrees different from $-3d_1$. If $X^\ast_{2^+}=\emptyset$, we are done. If $X^\ast_{2^+}\neq\emptyset$ let $x_{i+1}\in X^\ast_{2^+}$ and the assumptions (i)-(iii) hold  for $i+1$. Proceeding and update $X^\ast_{2^+}$ repeatedly, we eventually eliminate all conflicts in $G[I_0,X_{2^+}]$. Moreover, conditions (1)-(3)  remain  valid currently. Hence no conflict remains incident to $X_{2^+}$ finally.

\vspace{1mm}\textbf{Remark on the dynamics of $X^\ast_{2^+}$:} Since (in Case~1) $I_{0,0}$ may expand, $X^\ast_{2^+}$ may also expand. However, $X^\ast_{2^+}$ is included in $X_{2^+}$, and $X_{2^+}$  decreases  at each step. So the recursive process will terminate  with   $X^\ast_{2^+}=\emptyset$.

\vspace{1mm}

\noindent\textbf{2. Resolving conflicts incident to $X_1$.}

After resolving conflicts for $X_{2^+}$, we update $I_{0,0}$. Note that  $X_1$ remains unchanged.
Let $X^\ast_{1}\subseteq X_{1}$ be the set of vertices in   $X_1$ that have a neighbor in (updated) $I_{0,0}$.
If $X^\ast_1=\emptyset$, we are finished.

Assume $X^\ast_1\neq\emptyset$ and let $M^\ast_1$ be the subset of $M_1$ consisting of edges incident to vertices in $X^\ast_1$.
Assign weight $-d_1$ to every edge in $M^\ast_1$.  After the assignment, for each  $x\in X^\ast_1$, $d_w(x)=-d_1$, while $d_w(y_x)$ remains $0$ and $d_w(z_x)$ decreases by $d_1$,   making $z_x$ a Type III vertex.
This    implies that  condition (5) holds, and conditions (1)-(3) remain  valid. Note that, at present, both $X^\ast_{2^+}$ and $X^\ast_{1}$ are empty, and so condition (6) holds. Condition (4) holds because: vertices with $d_w(x)=-3d_1$ come from Case~2.
In Case~2.1, $x\in X_{3^+}$ has at least three neighbors in $I_0$;
in Case~2.2, $x\in X_2$ has exactly two neighbors in $I_0$.
In both cases, all neighbors $y$ of $x$ in $I_0$ satisfy $d_w(y)\neq-3d_1$ by construction. Thus, conditions (1)-(6) all hold, and $w$ is a proper $\{-d_1,0,d_2\}$-edge weighting of $G$.

 This completes the proof of Theorem~\ref{thm1}.
\end{proof}

\section{Concluding Remarks}

This paper establishes that every regular graph is universally 3-edge-weightable.  The conflict-resolution scheme introduced in this work may be applicable to other weighting and labeling problems. The two-phase approach--comprising global matching adjustments followed by local conflict resolution--provides a framework for handling parameter-dependent constructions in combinatorial graph theory.

\noindent\textbf{Future work.} Our result naturally leads to several open questions:
\begin{itemize}
    \item Does universal 3-edge-weightability extend to all nice graphs? (Problem 1.1)

    \item What other graph classes (beyond regular graphs) admit this property?
    \item Can the techniques be extended to study edge-weight choosability problems?
\end{itemize}

\noindent\textbf{Acknowledgments}

The first author is supported by NSFC (No. 11701195) and Fundamental Research Funds for the Central Universities (No. ZQN-904).


\begin{thebibliography}{99}
\small \setlength{\itemsep}{.8mm}
\bibitem{Bondy} J.A. Bondy, U.S.R. Murty, {Graph Theory}, Graduate Texts in Mathematics, vol. 244, Springer, New York, 2008.

\bibitem{Thomason} M. Karo\'nski, T. {\L}uczak, A. Thomason, Edge weights and vertex colours, J. Comb. Theory, Ser. B {91} (2004) 151-157.

       \bibitem{Chartrand} G. Chartrand, M.S. Jacobson, J. Lehel, O. Oellermann, S. Ruiz, F. Saba, Irregular networks, Congr.
Numer. 64 (1988) 197-210.

  \bibitem{Faudree}R.J. Faudree, J. Lehel, Bound on the irregularity strength of regular graphs, Colloq. Math. Soc.
J\'{a}nos Bolyai 52 (1988) 247-256.


    \bibitem{Kalkowski2011}  M. Kalkowski, M. Karo\'{n}ski, F. Pfender, A new upper bound for the irregularity strength of graphs.
SIAM J. Discrete  Math. 25(3) (2011)  1319-1321.

\bibitem{Przybylo6} J. Przyby{\l}o, F. Wei, Short proof of the asymptotic confirmation of the Faudree-Lehel conjecture,
Electron. J. Comb. 30 (4) (2023)   Paper No. 4.27, 13 pp.


\bibitem{Przybylo7} J. Przyby{\l}o, F. Wei, On the asymptotic confirmation of the Faudree-Lehel conjecture for general
graphs, Combinatorica 43 (2023) 791-826.


\bibitem{AlonWei} N. Alon, F. Wei, Irregular subgraphs, Combin. Probab. Comput. 32 (2) (2023) 269-283.


 \bibitem{MaXie}  J. Ma, S. Xie, Finding irregular subgraphs via local adjustments, J. Combin. Theory, Ser. B 174 (2025) 71-98.

     \bibitem{AddarioBerry} L. Addario-Berry, K. Dalal, C. McDiarmid, B. Reed, A. Thomason, Vertex-colouring edge weightings, Combinatorica 27 (2007) 1-12.

     \bibitem{AddarioBerry1} L. Addario-Berry, K. Dalal, B.A. Reed, Degree constrained subgraphs, Discrete Appl. Math. 156 (7)
(2008) 1168-1174.

  \bibitem{Wang} T. Wang, Q. Yu, A note on vertex-coloring 13-edge-weighting, Front. Math. China 3 (2008)
581-587.

 \bibitem{Kalkowski} M. Kalkowski, M. Karo\'{n}ski, F. Pfender, Vertex-coloring edge-weightings: towards the 1-2-3-conjecture, J. Comb. Theory, Ser. B 100 (2010) 347-349.


\bibitem{Keusch1} R. Keusch, A solution to the 1-2-3 conjecture, J. Comb. Theory, Ser. B {166} (2024) 183-202.

    \bibitem{Keusch}R. Keusch, Vertex-coloring graphs with 4-edge-weightings, Combinatorica 43 (2023) 651-658.

        \bibitem{Zhong} L. Zhong, The 1-2-3-conjecture holds for dense graphs, J. Graph Theory 90 (2018) 561-564.


\bibitem{Przybylo}   J. Przyby{\l}o, The 1-2-3 conjecture almost holds for regular graphs, J. Comb. Theory, Ser. B 147
(2021) 183-200.

\bibitem{Przybylo1} J. Przyby{\l}o, The 1-2-3 conjecture holds for graphs with large enough minimum degree, Combinatorica 42 (2022) 1487-1512.





     \bibitem{Dudek} A. Dudek, D. Wajc, On the complexity of vertex-coloring edge-weightings, Discrete Math. Theor.
Comput. Sci. 13 (3) (2011) 45-50.

\bibitem{Thomassen1} C. Thomassen, Y. Wu, C.-Q. Zhang, The 3-flow conjecture, factors modulo $k$, and the 1-2-3-conjecture, J. Comb. Theory, Ser. B 121 (2016) 308-325.

  \bibitem{Bensmail20231}    J. Bensmail, H. Hocquard, D. Lajou, \'{E}. Sopena, A proof of the multiplicative 1-2-3 conjecture, Combinatorica 43 (2023) 37-55.

  \bibitem{Kalkowski2017}  M. Kalkowski, M. Karo\'{n}ski, F. Pfender,  The 1-2-3-conjecture for hypergraphs. J. Graph Theory
85(3) (2017)  706-715.

 \bibitem{Grytczuk}  J. Grytczuk, From the 1-2-3 conjecture to the Riemann hypothesis, Eur. J. Comb. 91 (2020), Paper 103213, 10pp.

\bibitem{Baudona} O. Baudon, J. Bensmail, J. Przyby{\l}o, M. Wo\'{z}niak, On decomposing regular graphs into locally
irregular subgraphs, Eur. J. Comb. 49 (2015) 90-104.

\bibitem{Przybylo2} J. Przyby{\l}o, M. Wo\'{z}niak, On a 1-2 conjecture, Discrete Math. Theor. Comput. Sci. 12 (2010)
101-108.

 

        

 

       \bibitem{Wong1}    T.-L. Wong, X. Zhu, Every graph is (2, 3)-choosable, Combinatorica 36 (2014) 121-127.

  

   \bibitem{dq1} K. Deng, H. Qiu, Every graph is uniform-span (2,2)-choosable: Beyond the 1-2 conjecture, 	arXiv:2506.14253 [math.CO]

\bibitem{Bartnicki09} T. Bartnicki, J. Grytczuk, S. Niwczyk, Weight choosability of graphs, J. Graph Theory {60} (2009) 242-256.

\bibitem{Wong} T.-L. Wong, X. Zhu, Total weight choosability of graphs, J. Graph Theory {66} (2011) 198-212.

\bibitem{Cao} L. Cao, Total weight choosability of graphs: towards the 1-2-3-conjecture, J. Comb. Theory, Ser. B {149} (2021) 109-146.

    \bibitem{Zhu} X. Zhu, Every nice graph is (1, 5)-choosable, J. Comb. Theory, Ser. B \textbf{157} (2022) 524-551.

\bibitem{Liang2} Y.-C. Liang, T.-L. Wong, X. Zhu, Total weight choosability for Halin graphs, Electron. J. Graph Theory Appl. {9} (1) (2021) 11-24.

\bibitem{Liang} Y.-C. Liang, T.-L. Wong, X. Zhu, Graphs with maximum average degree less than $\frac{11}{4}$ are $(1, 3)$-choosable, Discrete Math. {341} (2018) 2661-2671.

\bibitem{Luzhu} H. Lu, X. Zhu, Dense Eulerian graphs are $(1, 3)$-choosable, Electron. J. Comb. {29}(2) (2022) Paper No. P2.54, 8pp.

    \bibitem{Tang2025} Y. Tang, Y. Yao, Total list weighting of Cartesian product of graphs, Discrete Appl. Math. {367} (2025) 30-39.


          \bibitem{dq} K. Deng, H. Qiu, The 1-2 conjecture holds for regular graphs, J. Comb. Theory, Ser. B {174} (2025) 207-213.

\end{thebibliography}
\end{document}